\newcolumntype{P}[1]{>{\centering\arraybackslash}p{#1}}
\newtheorem{theorem}{Theorem}
\newtheorem{corollary}{Corollary}
\newtheorem{proposition}{Proposition}
\theoremstyle{definition}
\newtheorem{remark}{Remark}
\newtheorem{example}{Example} % The '*' makes it unnumbered
\begin{document}

 \tolerance2500

\title{\Large{\textbf{On  groupoids  with Bol-Moufang type  identities}}}
\author{\normalsize {Grigorii Horosh, Victor Shcherbacov, Alexandru Tcachenco, Tatiana Yatsko}
}

 \maketitle

\begin{abstract}
We present results about  groupoids of small order  with Bol-Moufang type identities both classical and non-classical  which are listed in  \cite{Cote, Fenyves_1}.

\medskip

\noindent \textbf{2000 Mathematics Subject Classification:} 20N05

\medskip

\noindent \textbf{Key words and phrases:} groupoid, quasigroup, Bol-Moufang type identity.
\end{abstract}

\bigskip

\section{Introduction}

Binary groupoid $(G, \ast)$ is  a non-empty set $G$ together with a binary operation \lq\lq $\ast$\rq\rq \, which is defined on the set $G$.

An identity based on a single binary operation is of Bol-Moufang type if \lq\lq both sides consist of the same three different letters taken in the same order but one of them occurs twice on each side\rq\rq \cite{Fenyves_1}. We use list of 60 Bol-Moufang type identities given in \cite{JAIEOLA_2}.  %%%See Tables 1 and 2 below  \cite{HSTY}.

There exist other (more general) definitions of Bol-Moufang type identities and, therefore, other lists and classifications of such identities \cite{AKHTAR, Cote}.
An identity based on a single binary operation is of generalized  Bol-Moufang type if \lq\lq both sides consist of the same three different letters  but one of them occurs twice on each side\rq\rq \cite{AKHTAR, Cote}.
In this paper we use both classifications.

Identities from Fenyves's list  we shall name here classical Bol-Moufang type identities. Generalized Bol-Moufang type identities  we shall name here as non-classical type identities. It is clear that any classical type identity is and of non-classical type, but inverse is not true.

 Quasigroups and loops, in which a Bol-Moufang type identity is  true, are  central and classical objects of Quasigroup Theory. We recall, works of R.~Moufang, G.~Bol, R.~H.~Bruck, V.~D.~Belousov, K. Kunen,  S.~Gagola III  and of many other mathematicians are devoted to the study of quasigroups and loops with Bol-Moufang type identities \cite{MUFANG, BRUCK_60_1, VD, Fenyves_1, KUNEN_96,  Gagola_10, Gagola_11, Ph_2005, Cote, AKHTAR}.

We continue the study of groupoids with  Bol-Moufang type identities \cite{NOVIKOV_08,  2017_Scerb, CHErnov, CHErnov_2018, Cernov_2018_2, HSTY_Iasi, HSTY_Chisin}.

%%Some results  presented in this paper are given in \cite{CHErnov_2018, Cernov_2018_2, HSTY_Iasi, %%HSTY_Chisin} by another name of identity.

\section{Results}
\subsection{Some results on groupoids of order two}

It is clear that there exist 16 groupoids of order 2 and there exist $n^{n^2}$ of groupoids of order $n$.

We list isomorphic pairs of groupoids of order two. If a groupoid does not have a pair, then this groupoid has automorphism group of order two.

Below quadruple $22  \,\,  12$ means groupoid of order 2 with the following Cayley table :

\[
\begin{array}{r|rr}
\ast & 1 & 2 \\
\hline
    1 & 2 & 2  \\
    2 & 1 & 2  \\
    \end{array}
\]
and so on. In such record groupoid is commutative if and only two elements, the second and the third, of a quadruple are  equal.

Groupoid $(G, \cdot)$ is isomorphic to groupoid $(G, \circ)$ if there exists a permutation $\alpha$ of symmetric group $S_G$ such that $x\circ y = \alpha^{-1} (\alpha x \cdot \alpha y) $ for all $x, y \in G$.

Groupoid $(G, \cdot)$ is anti-isomorphic to groupoid $(G, \circ)$ if there exists a permutation $\alpha$ of symmetric group $S_G$ such that $x\circ y = \alpha^{-1} (\alpha y \cdot \alpha x) $ for all $x, y \in G$.

\begin{remark}
If groupoid $(G, \cdot)$ is anti-isomorphic to groupoid $(G, \circ)$, then  groupoid $(G, \circ)$ is anti-isomorphic to groupoid $(G, \cdot)$. Really $x\cdot y = \alpha (\alpha^{-1} y \circ \alpha^{-1} x) $ for all $x, y \in G$.
\end{remark}

\begin{remark}
In commutative groupoid $(G, \cdot)$ any anti-isomorphism coincides with isomorphism.
\end{remark}

It is easy to check that the following propositions are fulfilled.

\begin{proposition}
Only the following groupoids of order two  are isomorphic in pairs:

$11 \,   11$ and $22 \,\,  22$;

$11 \,\,  12$ and    $12 \,\, 22$;

$11 \,\,  21$ and $21 \,\,  22$;

$11 \,\,  22$;

$12 \,\,  11$ and $22 \,\,  12$;

$12 \,\,  12$;

$12 \,\,  21$ and $21 \,\,   12$;

$21 \,\,  11$ and $22 \,\,  21$;

$21 \,\, 21$;

$22 \,\,  11$.
\end{proposition}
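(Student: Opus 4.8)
The plan is to exploit that the underlying set has only two elements, so the symmetric group $S_G = S_2 = \{\mathrm{id}, \alpha\}$ contains just the identity and the transposition $\alpha = (1\,2)$, which satisfies $\alpha = \alpha^{-1}$. By the definition of isomorphism recalled above, two groupoids of order two are isomorphic if and only if one is carried to the other either by $\mathrm{id}$ (giving the trivial isomorphism) or by transport along $\alpha$. Hence the isomorphism classes are precisely the orbits of the $16$ groupoids under the single involution $T$ defined by $x \circ y = \alpha(\alpha x \ast \alpha y)$. Since $T$ is an involution on a $16$-element set, every orbit has size $1$ or $2$, and it remains only to compute $T$ on each quadruple.

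First I would make the action of $T$ explicit in the quadruple notation. Writing a groupoid as $(a,b,c,d)$, meaning $1\ast 1 = a$, $1 \ast 2 = b$, $2 \ast 1 = c$, $2 \ast 2 = d$, a direct evaluation of $x \circ y = \alpha(\alpha x \ast \alpha y)$ at the four pairs gives
\[
1\circ 1 = \alpha(d), \quad 1 \circ 2 = \alpha(c), \quad 2 \circ 1 = \alpha(b), \quad 2 \circ 2 = \alpha(a),
\]
so that $T$ sends the quadruple $(a,b,c,d)$ to $(\alpha d, \alpha c, \alpha b, \alpha a)$, where $\alpha$ interchanges the symbols $1$ and $2$. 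In words, $T$ reverses the order of the four entries and then swaps every $1$ with every $2$.

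Next I would apply this rule to all $16$ quadruples and read off the orbits. For instance, $(1,1,1,2)=11\,12$ maps to $(\alpha 2, \alpha 1, \alpha 1, \alpha 1)=(1,2,2,2)=12\,22$, accounting for the second listed pair, and the remaining pairs are obtained the same way. The four singletons are exactly the quadruples fixed by $T$, i.e.\ those with $(\alpha d, \alpha c, \alpha b, \alpha a)=(a,b,c,d)$; such a fixed groupoid is one for which $\alpha$ is an automorphism, in agreement with the earlier remark that a groupoid lacking a pair has automorphism group of order two.

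The main obstacle is nothing deeper than careful bookkeeping: the content is a finite verification of $16$ evaluations. The one place to be cautious is getting the transport formula right --- both the reversal of positions and the symbol swap must be applied --- and a convenient global check is that the six pairs and four fixed points account for $2\cdot 6 + 4 = 16$ groupoids and $6 + 4 = 10$ isomorphism classes, matching the length of the list in the statement.
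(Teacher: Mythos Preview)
Your argument is correct and amounts to the direct verification the paper has in mind when it says ``it is easy to check,'' carried out via the clean observation that transport by the unique nontrivial permutation $\alpha=(1\,2)$ sends the quadruple $(a,b,c,d)$ to $(\alpha d,\alpha c,\alpha b,\alpha a)$. The paper gives no further details, so your formula for $T$ and the orbit count $2\cdot 6+4=16$ are a welcome addition rather than a departure from its approach.
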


\begin{proposition}
Only the following groupoids of order two  are anti-isomorphic in pairs:

$11 \,   21$ and $22 \,\,  12$;

$21 \,   22$ and $12 \,\,  11$;

$11 \,   22$ and $12 \,\,  12$;

$21 \,  \, 21$ and $22 \,\,  11$.
\end{proposition}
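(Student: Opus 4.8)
The plan is to turn the statement into a finite check, using that $G=\{1,2\}$ forces the symmetric group to be $S_G=\{e,\tau\}$ with $\tau=(1\,2)$, so that only two permutations can ever implement an anti-isomorphism. First I would record how each choice of $\alpha$ acts on a Cayley table written as a quadruple $pq\,rs$, meaning $1\ast1=p$, $1\ast2=q$, $2\ast1=r$, $2\ast2=s$. For $\alpha=e$ the relation $x\circ y=\alpha^{-1}(\alpha y\cdot\alpha x)$ collapses to $x\circ y=y\cdot x$, the opposite operation, which sends $pq\,rs\mapsto pr\,qs$. For $\alpha=\tau$ a direct computation of $x\circ y=\tau(\tau y\cdot\tau x)$ gives $pq\,rs\mapsto\bar s\,\bar q\,\bar r\,\bar p$, where $\bar1=2$ and $\bar2=1$. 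This second map is an involution with no fixed points (since $\bar q=q$ is impossible), so it matches the sixteen groupoids into eight pairs.

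The organising observation I would use is that $(G,\circ)$ is anti-isomorphic to $(G,\cdot)$ exactly when $(G,\circ)$ is isomorphic to the opposite of $(G,\cdot)$, the groupoid with operation $y\cdot x$; indeed the anti-isomorphism condition for $\alpha$ says precisely that $\alpha$ is an isomorphism onto that opposite. Hence every groupoid anti-isomorphic to a fixed one lies in the isomorphism class of its opposite, and in particular is isomorphic to the image under the fixed-point-free involution $\tau$ above. It therefore suffices to examine the eight pairs produced by that involution and to compare them against the isomorphism classes already determined in Proposition 1.

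With this in hand the result follows by inspection. Four of the eight pairs join two groupoids that are commutative, namely $11\,11$ with $22\,22$, $11\,12$ with $12\,22$, $12\,21$ with $21\,12$, and $21\,11$ with $22\,21$; by Remark 2 their anti-isomorphism coincides with an isomorphism, so these pairs are already recorded in Proposition 1 and contribute nothing new. The remaining four pairs join non-commutative groupoids lying in distinct isomorphism classes, and I would certify each one by exhibiting the permutation $\tau$ that realises it: these are exactly $11\,21$ with $22\,12$, $21\,22$ with $12\,11$, $11\,22$ with $12\,12$, and $21\,21$ with $22\,11$, which is the asserted list.

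The step I expect to be the main obstacle is the bookkeeping required to guarantee completeness and to present the pairs without redundancy. Because two of the relevant isomorphism classes, that of $11\,21$ and that of $12\,11$, each contain two elements, their mutual anti-isomorphism surfaces as the two separate matched pairs $11\,21\leftrightarrow22\,12$ and $21\,22\leftrightarrow12\,11$ rather than as a single one, and one must take the matching induced by $\tau$ so that a pair is not confused with the merely transposed table $pq\,rs\mapsto pr\,qs$. Checking that all sixteen groupoids are accounted for, and that no cross-class anti-isomorphism outside these couplings has been missed, is the only genuinely error-prone part; once the involution is tabulated against Proposition 1 everything else is routine.
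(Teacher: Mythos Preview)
Your proposal is correct and, in fact, considerably more detailed than the paper's own treatment: the paper gives no proof at all beyond the blanket remark ``It is easy to check that the following propositions are fulfilled'' preceding Propositions 1--3. Your systematic reduction to the two permutations $e,\tau\in S_2$, the explicit computation of the $\tau$-anti-isomorphism as $pq\,rs\mapsto\bar s\,\bar q\,\bar r\,\bar p$, and the comparison against the isomorphism classes of Proposition 1 via Remark 2 together constitute a genuine proof where the paper simply asserts the result.
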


\begin{proposition} \label{proposit_Grig}
Only the following groupoids of order two  are  isomorphic or  anti-isomorphic:

$11 \,   11$ and $22 \,\,  22$;

$11 \,\,  12$ and    $12 \,\, 22$;

$11 \,\,  21$ and $21 \,\,  22$ and $22 \,\,  12$ and $12 \,\,  11$;

$11 \,   22$ and $12 \,\,  12$;

$12 \,\,  21$ and $21 \,\,   12$;

$21 \,\,  11$ and $22 \,\,  21$;

$21 \,  \, 21$ and $22 \,\,  11$.
\end{proposition}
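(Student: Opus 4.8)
The plan is to reduce everything to a finite computation, exploiting the fact that for order two the only available permutations are the identity and the transposition $\tau = (1\,2)$, so $S_G = S_2 = \{\mathrm{id}, \tau\}$. First I would fix the quadruple notation: write $abcd$ for the groupoid with $1\cdot 1 = a$, $1\cdot 2 = b$, $2\cdot 1 = c$, $2\cdot 2 = d$, and write $\bar x$ for $\tau(x)$, so $\bar 1 = 2$ and $\bar 2 = 1$. Substituting $\alpha = \tau$ into the isomorphism formula $x\circ y = \alpha^{-1}(\alpha x \cdot \alpha y)$ and using $\tau^{-1} = \tau$, I would read off the four entries of $(G, \circ)$ and obtain the rule $abcd \mapsto \bar d\,\bar c\,\bar b\,\bar a$ for the nontrivial isomorphism. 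In the same way, the anti-isomorphism formula $x\circ y = \alpha^{-1}(\alpha y \cdot \alpha x)$ with $\alpha = \mathrm{id}$ yields the transpose $abcd \mapsto acbd$, and with $\alpha = \tau$ yields $abcd \mapsto \bar d\,\bar b\,\bar c\,\bar a$.

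The key structural step is to note that these three nontrivial rules, together with the identity, are closed under composition and form a Klein four-group acting on the set of $16$ groupoids: the isomorphism-by-$\tau$ and the transpose are commuting involutions whose product is exactly the anti-isomorphism-by-$\tau$. Consequently the relation \lq\lq isomorphic or anti-isomorphic\rq\rq\ is precisely the orbit equivalence of this group, and the classes in the statement are its orbits. Rather than recompute the orbits from scratch, I would invoke Propositions 1 and 2 and build a graph on the $16$ groupoids whose edges are the isomorphic pairs (from Proposition 1) together with the anti-isomorphic pairs (from Proposition 2); the desired classes are then its connected components.

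The bulk of the proof is bookkeeping: list the isomorphism edges $\{1111,2222\}$, $\{1112,1222\}$, $\{1121,2122\}$, $\{1211,2212\}$, $\{1221,2112\}$, $\{2111,2221\}$ together with the isomorphic fixed points $1122,\,1212,\,2121,\,2211$, overlay the anti-isomorphism edges $\{1121,2212\}$, $\{2122,1211\}$, $\{1122,1212\}$, $\{2121,2211\}$, and read off the components. Six of the seven classes are two-element sets arising because an isomorphic pair either carries no incident anti-isomorphism edge or is glued to an isomorphic fixed point by a single anti-isomorphism edge.

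The one place needing care — and the only step I would call nontrivial — is the four-element class $\{11\,21,\ 21\,22,\ 22\,12,\ 12\,11\}$: here the isomorphism edges $11\,21 \sim 21\,22$ and $12\,11 \sim 22\,12$ chain together with the anti-isomorphism edges $11\,21 \sim 22\,12$ and $21\,22 \sim 12\,11$, so transitivity forces all four tables into one orbit. I would verify directly, using the two rules above, that applying isomorphism-by-$\tau$ and the transpose to $11\,21$ sweeps out exactly these four tables and no others, which both confirms the merge and guarantees that no further identifications occur. Checking that the seven components exhaust all sixteen groupoids, $2+2+4+2+2+2+2 = 16$, completes the argument.
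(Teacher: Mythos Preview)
Your proposal is correct and follows essentially the same approach as the paper, which offers no proof beyond the remark preceding Propositions~1--3 that ``It is easy to check that the following propositions are fulfilled''; the intended argument is simply to combine the isomorphism pairs of Proposition~1 with the anti-isomorphism pairs of Proposition~2 and take the resulting transitive closure, exactly as you do. Your Klein four-group observation and the explicit quadruple rules $abcd \mapsto \bar d\bar c\bar b\bar a$, $abcd \mapsto acbd$, $abcd \mapsto \bar d\bar b\bar c\bar a$ are a welcome tightening of that informal verification and make clear why no further identifications can occur.
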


\begin{corollary} \label{corollaryGrig}
The following groupoids of order two  are non   isomorphic and non  anti-isomorphic in pairs:
$\underline{ 11 \,   11}$;
$\underline{11 \,\,  12}$;
$11 \,\,  21$;
$\underline{11 \,   22}$;
$\underline{12 \,\,  21}$;
$21 \,\,  11$;
$21 \,  \, 21$.
\end{corollary}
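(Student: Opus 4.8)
The plan is to deduce Corollary \ref{corollaryGrig} directly from Proposition \ref{proposit_Grig} by exhibiting the seven listed groupoids as representatives, one drawn from each of the seven groupings recorded there. First I would verify that the binary relation ``isomorphic or anti-isomorphic'' is an equivalence relation on the set of all groupoids on $G$. Reflexivity and the transitivity of isomorphism alone are standard, while symmetry of anti-isomorphism is exactly the content of the Remark stating that anti-isomorphism is a symmetric relation. The only point that needs a short check is transitivity across the two notions, and in particular that the composite of two anti-isomorphisms is an isomorphism: if $x \circ y = \alpha^{-1}(\alpha y \cdot \alpha x)$ and $x \bullet y = \beta^{-1}(\beta y \circ \beta x)$, then substituting the first relation into the second and setting $\gamma = \alpha\beta$ gives $x \bullet y = \gamma^{-1}(\gamma x \cdot \gamma y)$, an isomorphism. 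Composing an isomorphism with an anti-isomorphism similarly yields an anti-isomorphism. Hence the relation is transitive, and the seven groupings of Proposition \ref{proposit_Grig} are genuine equivalence classes.

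Next I would confirm that these seven classes exhaust all $16$ groupoids of order two: counting the members of the groupings in Proposition \ref{proposit_Grig} gives $2+2+4+2+2+2+2 = 16$, and the listed quadruples are pairwise distinct, so the classes partition the whole set. I would then match each groupoid named in the corollary to its class: $11\,11$ lies in the first class, $11\,12$ in the second, $11\,21$ in the third, $11\,22$ in the fourth, $12\,21$ in the fifth, $21\,11$ in the sixth, and $21\,21$ in the seventh. Thus the seven groupoids of Corollary \ref{corollaryGrig} are precisely one representative selected from each of the seven classes.

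The conclusion is then immediate: by the word ``Only'' in Proposition \ref{proposit_Grig}, any two groupoids lying in different groupings are neither isomorphic nor anti-isomorphic. Since the seven representatives come from seven distinct classes, they are pairwise non-isomorphic and non-anti-isomorphic, which is the assertion of the corollary. I do not expect a serious obstacle, as the argument is essentially bookkeeping; the one genuinely mathematical ingredient is the transitivity check above, namely that an anti-isomorphism composed with an anti-isomorphism is an isomorphism, and it is precisely this fact that legitimizes reading the groupings of Proposition \ref{proposit_Grig} as equivalence classes rather than as a loose list of related pairs.
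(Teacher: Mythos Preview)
Your proposal is correct and takes essentially the same approach as the paper: the paper gives no explicit proof of the corollary, but the remark immediately following it (that one may replace $11\,11$ by $22\,22$, etc.) makes clear that the intended argument is exactly yours, namely selecting one representative from each of the seven groupings in Proposition~\ref{proposit_Grig}. Your added verification that ``isomorphic or anti-isomorphic'' is an equivalence relation and your cardinality check are welcome rigor that the paper omits.
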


Using the list of groupoids which is presented in Proposition \ref{proposit_Grig} we can compose other lists of groupoids for Corollary \ref{corollaryGrig}. For example, instead of groupoid  $\underline{ 11 \,   11}$ we can write groupoid $\underline{ 22 \,   22}$ and so on.

In the list presented in Corollary \ref{corollaryGrig} semigroups of order two are underlined  \cite{WIKI_44}.

\subsection{$(12)$-parastrophes of identities}

We recall, $(12)$-parastroph of groupoid $(G, \cdot)$ is a groupoid $(G, \ast)$ in which  operation \lq\lq $\ast$ \rq\rq \, is obtained by the following rule:
\begin{equation} \label{HSTY_1}
x\ast y = y\cdot x.
\end{equation}

It is clear that   for any groupoid $(G, \cdot)$   there exists its $(12)$-parastroph groupoid $(G, \ast)$.

Cayley table of groupoid $(G, \ast)$ is a mirror image of the Cayley table of groupoid $(G, \cdot)$ relative to main diagonal.
Notice, for any binary quasigroup there exist  five its parastrophes \cite{VD, HOP, 2017_Scerb} more.

Suppose that an identity $F$ is true in groupoid $(G, \cdot)$.
Then we can obtain $(12)$-parastrophic identity $F^{\ast}$ of the identity $F$ replacing the operation \lq\lq $\cdot$\rq\rq \, with  the operation  \lq\lq $\ast$\rq\rq \, and changing the order of variables using rule (\ref{HSTY_1}).
\begin{remark}
In quasigroup case,  similarly to $(12)$-parastrophe identity other parastrophe identities can be  defined. See \cite{Scer_Shcherb_2} for details.
\end{remark}

It is clear that an identity $F$ is true in groupoid $(G, \cdot)$ if and only if in groupoid $(Q, \ast)$ identity $F^{\ast}$ is true.

\begin{proposition}
The number of groupoids of a finite fixed order  in which the identity $F$ is true coincides with the number of groupoids in which  the identity  $F^{\ast}$ is true.
\end{proposition}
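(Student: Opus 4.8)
The plan is to exhibit an explicit bijection between the set of groupoids of order $n$ in which $F$ is true and the set of groupoids of order $n$ in which $F^{\ast}$ is true, and then to read off equality of cardinalities. Fix the underlying set $G = \{1, \dots, n\}$ and let $\mathcal{G}_n$ denote the finite collection of all $n^{n^2}$ groupoids with carrier $G$. I would define a map $\Phi \colon \mathcal{G}_n \to \mathcal{G}_n$ sending each groupoid $(G, \cdot)$ to its $(12)$-parastrophe $(G, \ast)$, where $x \ast y = y \cdot x$ as in (\ref{HSTY_1}). Since the parastrophe of any groupoid exists and again has carrier $G$, this map is well defined.

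First I would verify that $\Phi$ is a bijection, indeed an involution. If $(G, \ast)$ is the $(12)$-parastrophe of $(G, \cdot)$ and $(G, \circ)$ is the $(12)$-parastrophe of $(G, \ast)$, then $x \circ y = y \ast x = x \cdot y$, so $(G, \circ) = (G, \cdot)$ and hence $\Phi \circ \Phi = \mathrm{id}$. An involution of a finite set is a bijection, so $\Phi$ is a permutation of $\mathcal{G}_n$.

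Next I would invoke the equivalence already recorded in the excerpt: $F$ is true in $(G, \cdot)$ if and only if $F^{\ast}$ is true in $(G, \ast) = \Phi(G, \cdot)$. Writing $S_F = \{(G, \cdot) \in \mathcal{G}_n : F \text{ holds in } (G, \cdot)\}$ and $S_{F^{\ast}} = \{(G, \ast) \in \mathcal{G}_n : F^{\ast} \text{ holds in } (G, \ast)\}$, this equivalence says exactly that $(G, \cdot) \in S_F$ precisely when $\Phi(G, \cdot) \in S_{F^{\ast}}$. Consequently the restriction $\Phi|_{S_F}$ is a bijection of $S_F$ onto $S_{F^{\ast}}$, and therefore $|S_F| = |S_{F^{\ast}}|$, which is the assertion.

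I expect no genuine obstacle here; the argument is essentially self-contained once the parastrophe map is recognised as an involution. The only point demanding care is the bookkeeping that ensures the biconditional is used in both directions, so that $\Phi|_{S_F}$ is surjective as well as injective onto $S_{F^{\ast}}$, and the explicit observation that the count is taken over groupoids on a fixed labelled carrier $G$, for which $\Phi$ is defined directly. If one instead wished to count isomorphism classes, one would additionally note that $\Phi$ commutes with the action of $S_G$ by isomorphism and hence descends to a bijection of classes.
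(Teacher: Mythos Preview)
Your argument is correct and is exactly the reasoning the paper has in mind: the paper states the proposition immediately after recording that $F$ holds in $(G,\cdot)$ iff $F^{\ast}$ holds in its $(12)$-parastrophe, and leaves the bijection implicit. You have simply made explicit that the parastrophe map is an involution on $\mathcal{G}_n$ carrying $S_F$ onto $S_{F^{\ast}}$, which is the intended (and essentially only) proof.
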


\begin{example}\label{Example_1_HY} \cite{HSTY_Iasi}.
\textit{ Way 1.} We find $(12)$-parastroph of the Bol-Moufang type identity $F_1$: $xy\cdot zx = (xy\cdot z)x$.

We have $(x\ast z) \ast (y\ast x) = x \ast (z\ast (y\ast x))$. After renaming of variables ($y\leftrightarrow z$) and operation ($\ast \rightarrow \cdot $) we obtain the following Bol-Moufang type identity $F_3$: $xy \cdot zx = x  (y\cdot zx)$.

Therefore $(F_1)^{\ast} = F_3$. It is true and vice versa $(F_3)^{\ast} = F_1$.

\textit{ Way 2.}
We recall, left translation of a groupoid $(G, \cdot)$ is defined as follow: $L_a x = a\cdot x$ for all $x\in G$;
right translation of a groupoid $(G, \cdot)$ is defined similarly: $R_a x = x\cdot a$ for all $x\in G$ and a fixed element $a\in G$.

Then we can re-write identity  $F_1$ in the following form: $L_x y\cdot R_x z = R_x(L_x y\cdot z)$.

There exists the following connections between left and right translations of a groupoid $(G, \cdot)$ and its $(12)$-parastrophe    \cite{HOP, 2017_Scerb}:

\begin{equation} \label{HSTY_2}
L^{\ast}_a= R^{\cdot}_a, R^{\ast}_a= L^{\cdot}_a.
\end{equation}

Further using rules (\ref{HSTY_1}) and (\ref{HSTY_2}) we have $L_x z \cdot R_x y  = L_x(z\cdot R_x y)$, $xz\cdot yx = x(z\cdot yx)$. After renaming of variables ($y\leftrightarrow z$) we obtain the following Bol-Moufang type identity $F_3$: $xy \cdot zx = x  (y\cdot zx)$, i.e., $(F_1)^{\ast} = F_3$.
\end{example}

\begin{theorem} \label{12-parastrophes_1}
For classical Bol-Moufang type identities over groupoids the following equalities are true:

$(F_1)^{\ast} = F_3$, $(F_2)^{\ast} = F_4$, $(F_5)^{\ast} = F_{10}$, $(F_6)^{\ast} = F_6$, $(F_7)^{\ast} = F_8$,
$(F_9)^{\ast} = F_9$,   $(F_{11})^{\ast} = F_{24}$, $(F_{12})^{\ast} = F_{23}$,    $(F_{13})^{\ast} = F_{22}$,
$(F_{14})^{\ast} = F_{21}$, $(F_{15})^{\ast} = F_{30}$, $(F_{16})^{\ast} = F_{29}$, $(F_{17})^{\ast} = F_{27}$, $(F_{18})^{\ast} = F_{28}$,
$(F_{19})^{\ast} = F_{26}$, $(F_{20})^{\ast} = F_{25}$,  $(F_{31})^{\ast} = F_{34}$, $(F_{32})^{\ast} = F_{33}$, $(F_{35})^{\ast} = F_{40}$,
 $(F_{36})^{\ast} = F_{39}$, $(F_{37})^{\ast} = F_{37}$, $(F_{38})^{\ast} = F_{38}$, $(F_{41})^{\ast} = F_{53}$, $(F_{42})^{\ast} = F_{54}$,
 $(F_{43})^{\ast} = F_{51}$,$(F_{44})^{\ast} = F_{52}$, $(F_{45})^{\ast} = F_{60}$, $(F_{46})^{\ast} = F_{56}$, $(F_{47})^{\ast} = F_{58}$,
 $(F_{48})^{\ast} = F_{57}$, $(F_{49})^{\ast} = F_{59}$, $(F_{50})^{\ast} = F_{55}$.
\end{theorem}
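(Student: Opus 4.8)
The plan is to verify the 32 stated equalities by a direct but carefully organized computation, encoding each identity so that the bookkeeping becomes manageable and self-checking. First I would recall from \cite{JAIEOLA_2} the explicit forms of $F_1,\dots,F_{60}$. Each classical Bol-Moufang identity $F_i\colon A_i = B_i$ is determined by two pieces of data: the common left-to-right \emph{reading pattern} of its four letter-occurrences (one of the six patterns $xxyz$, $xyxz$, $xyyz$, $xyzx$, $xyzy$, $xyzz$), together with the unordered pair of bracketings placed on $A_i$ and $B_i$ (one of the five binary parenthesizations of a length-four word on each side). Since there are $\binom{5}{2}=10$ unordered bracketing pairs and $6$ reading patterns, this recovers the $6\cdot 10 = 60$ identities and fixes the encoding I will compute with.

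Next I would isolate the effect of the $(12)$-parastrophe on a single term. Applying the rule $a\cdot b = b\ast a$ of (\ref{HSTY_1}) recursively — equivalently, using the translation identities (\ref{HSTY_2}) as in \emph{Way 2} of Example \ref{Example_1_HY} — the $\ast$-term attached to any $\cdot$-term is its \emph{mirror}: the binary parse tree is reflected (left and right subtrees swapped at every internal node), which simultaneously reverses the left-to-right order in which the four leaves are read. Hence $F_i^{\ast}$, before renaming, reads the letters of $A_i$ and $B_i$ in reverse and carries the mirrored bracketings, exactly as the passage from $F_1$ to $(x\ast z)\ast(y\ast x)=x\ast(z\ast(y\ast x))$ illustrates.

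Then I would reduce the whole problem to two commuting involutions acting on the encoding. Reflection of parse trees is an involution on the five bracketings (the balanced one is fixed, the two extreme ones are swapped, and so are the remaining two). Reversal of the reading order followed by the unique renaming that reintroduces the letters in the canonical order $x,y,z$ — the substitution $y\leftrightarrow z$ in Example \ref{Example_1_HY} is one instance — is an involution on the six patterns: it fixes $xyyz$ and $xyzx$ and interchanges $xxyz\leftrightarrow xyzz$ and $xyxz\leftrightarrow xyzy$. The $(12)$-parastrophe is the product of these independent involutions, so it is itself an involution on the 60 identities; this agrees with the stated fact that $F$ holds in $(G,\cdot)$ iff $F^{\ast}$ holds in $(G,\ast)$ together with $(F^{\ast})^{\ast}=F$. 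Computing, for each of the 60 identities, the mirrored bracketing pair and the renaming demanded by the reversed pattern, and reading off the resulting index, produces the table. The involution structure is a strong consistency check: the table must split into $28$ pairs of distinct identities plus the four self-parastrophic cases, with $28\cdot 2 + 4 = 60$, and the fixed points $F_6,F_9,F_{37},F_{38}$ must be exactly the identities whose pattern lies in $\{xyyz,xyzx\}$ and whose bracketing pair is invariant under mirroring.

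The main obstacle is not any single deep step but the accuracy of the bookkeeping across all sixty identities: correctly mirroring each bracketing, tracking the reading-order reversal, and applying the right re-canonicalizing renaming so that the output is matched to its \emph{published} index in \cite{JAIEOLA_2}. I expect the error-prone part to be the renaming step on the patterns that are not reversal-symmetric (those in which the doubled letter moves under reversal), and I would guard against slips by checking, for every computed pair, that a second application of the procedure returns the original identity.
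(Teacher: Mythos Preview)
Your proposal is correct and matches the paper's approach: the paper does not give a formal proof of the theorem but relies on the method illustrated in Example~\ref{Example_1_HY}, i.e., direct case-by-case application of the mirror/reversal rule (\ref{HSTY_1})--(\ref{HSTY_2}) followed by renaming. Your encoding of each identity by a reading pattern and an unordered bracketing pair, together with the decomposition of the $(12)$-parastrophe into two commuting involutions, is a clean systematization of that same computation and supplies a useful consistency check (the predicted four fixed points $F_6,F_9,F_{37},F_{38}$ agree with the statement), but it is not a genuinely different route.
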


For quasigroups, analogue of Theorem \ref{12-parastrophes_1} is given in \cite{KUNEN_96}.

%%It is clear that any group satisfies any from the identities $F_1$--$F_{60}$. Moreover, any
\begin{proposition}
Any from the following groupoids $11  \: 11$, $ 22 \;  22$,  $11 \:  12$, $ 12 \;  22$, $11 \;  22$, $12 \;  12$ satisfies any from the identities $F_1$--$F_{60}$.
\end{proposition}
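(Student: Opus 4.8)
The plan is to recognize that each of the six groupoids listed is in fact a \emph{semigroup}, and then to reduce every Bol-Moufang identity to generalized associativity. First I would decode the six Cayley tables using the convention fixed in the excerpt, where a quadruple $abcd$ records $1\ast 1=a$, $1\ast 2=b$, $2\ast 1=c$, $2\ast 2=d$. Under this convention $11\,11$ and $22\,22$ are the two constant groupoids, in which $x\ast y$ equals a fixed element independently of the arguments; $11\,12$ and $12\,22$ are the two semilattices on $\{1,2\}$ (the meet and the join for the order $1<2$); and $11\,22$ and $12\,12$ are respectively the left-projection groupoid $x\ast y=x$ and the right-projection groupoid $x\ast y=y$. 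For each of these six tables I would verify associativity directly: in the constant case both $(x\ast y)\ast z$ and $x\ast(y\ast z)$ collapse to the fixed element; in the two projection cases both bracketings collapse to $x$ (left projection) or to $z$ (right projection); and the two semilattices are associative because meet and join are. Hence all six groupoids are associative.

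Next I would invoke the classical generalized associativity theorem: in any semigroup every bracketing of a given word produces the same element, so any two parenthesizations of one and the same sequence of factors are equal. This is the conceptual core of the argument, and it requires no computation beyond the associativity check above.

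Finally I would combine these two observations with the defining feature of the classical Bol-Moufang identities recalled in the Introduction, namely that both sides of such an identity consist of the same three letters taken in the same order, with one letter occurring twice. This means precisely that the left-hand and right-hand sides of every $F_i$, $1\le i\le 60$, are two parenthesizations of one and the same four-letter word; the identities in Fenyves's list differ from one another only in how the four factors are bracketed and in which letter is doubled, never in the left-to-right order of the letters. Consequently each $F_i$ is a formal consequence of associativity and therefore holds in every semigroup, in particular in all six groupoids under consideration.

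I expect essentially no obstacle here: the only genuine computation is the associativity check for six order-two tables, which is immediate, and the remainder is the standard reduction of classical Bol-Moufang type identities to generalized associativity. The one point worth flagging is that this clean argument relies crucially on the classical hypothesis that the variables occur in the same order on both sides; for the non-classical (generalized) Bol-Moufang identities, where the order of the letters may differ between the two sides, associativity alone would no longer suffice, and such identities would have to be treated separately, for instance by direct verification on each of the six tables.
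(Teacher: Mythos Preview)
Your argument is correct. You correctly identify each of the six tables (two constants, two semilattices, the left and right projections), verify associativity for each, and then use the defining feature of the classical Bol-Moufang identities---same three letters in the same order on both sides, one doubled---to conclude that every $F_i$ is an instance of generalized associativity and hence holds in any semigroup.

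The paper's own proof, however, consists of the single sentence ``It is possible to use direct calculations,'' i.e.\ a brute-force check of $6\times 60$ identities (presumably by computer, consistent with the rest of the paper). Your route is genuinely different and more explanatory: it isolates exactly why these six groupoids work (they are associative) and why the classical hypothesis on the order of variables is the relevant one. It also immediately shows that the same conclusion holds for the group $\mathbb{Z}_2$ (the tables $12\,21$ and $21\,12$), which the proposition does not list but which the data in Table~1 must reflect. The paper's approach, by contrast, requires no structural insight but scales uniformly to the non-classical identities of Table~2, where, as you rightly flag, your associativity reduction breaks down.
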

\begin{proof}
It is possible to use direct calculations.
\end{proof}

\subsection{Number of groupoids}

We count number of groupoids  of order two  with classical Bol-Moufang type identities given in \cite{JAIEOLA_2} including and number of non-isomorphic and number of non-isomorphic and   non-anti-isomorphic groupoids of order 2. See Table 1. Notice, in some places Table \ref{tab:table6} coincides with corresponding table from \cite{HSTY_Iasi}.

Table \ref{tab:table6} is organised as follows: in the first column it is given name of identity in Fen'vesh list; in the second it is given abbreviation of this identity, if this identity  has a name; in the third it is given identity; in the fourth column it is indicated the number of groupoids of order 2 with corresponding identity;   in the fifth column -- of non-isomorphic groupoids; and, in the sixth  column --  non-isomorphic and non-anti-isomorphic groupoids with corresponding identity.

\renewcommand{\arraystretch}{1.3}

\begin{longtable}{|p{0.88cm}|p{1.62cm}|p{3.2cm}|p{0.5cm}|p{0.5cm}| p{0.4cm}|p{0.6cm}}
\caption{Number of groupoids  of order 2 with classical  Bol-Moufang  identities}
\label{tab:table6}\\
\hline
\textbf{Na-} & \textbf{Abb.} & \textbf{Ident.} & \textbf{2} & \textbf{n.-} & \textbf{n.-is.,}\\
\textbf{me} & \textbf{} & \textbf{} & \textbf{} & \textbf{is.} & \textbf{an.}\\
\hline
$F_1$ &   &  $xy\cdot zx = (xy\cdot z)x$ & 10 & 6 & 5\\
\hline
 $F_2$ &   &  $xy\cdot zx = (x\cdot yz)x$ & 9 & 6 &5 \\
\hline
$F_3$ &   & $xy\cdot zx = x(y\cdot zx)$ & 10& 6& 5\\
\hline
$F_4$ & middle\,\, Mouf.  & $xy\cdot zx = x(yz\cdot x)$ & 9 & 6 & 5  \\
\hline
$F_5$ &   & $(xy \cdot z)x = (x \cdot y z)x$ & 11 & 7 & 6 \\
\hline
$F_6$ &  extra \,\, ident. & $(xy \cdot z)x = x (y  \cdot z x)$ & 10 & 7 & 5 \\
\hline
$F_7$ &   & $(xy \cdot z)x = x (y z \cdot x)$ & 9 & 6 & 5 \\
\hline
$F_8$ &   & $(x \cdot y z)x = x (y  \cdot z x)$ & 9 & 6 & 5 \\
\hline
$F_9$ &   & $(x \cdot y z)x = x (y   z \cdot x)$ & 10 & 6& 5 \\
\hline
$F_{10}$ &   & $x(y\cdot zx) = x(y z \cdot x)$ & 11 & 7 & 6\\
\hline
$F_{11}$ &   & $xy\cdot xz = (xy \cdot x)z$ & 8 & 5 &4\\
\hline
$F_{12}$ &   & $xy\cdot xz = (x\cdot y x)z$ & 9 & 7 & 6 \\
\hline
$F_{13}$ &  extra\,\, ident.  & $ xy\cdot xz = x(y x\cdot z) $ & 9 & 6 &5\\
\hline
$F_{14}$  &   & $xy\cdot xz = x(y \cdot x z)$ & 10 & 6 &5\\
\hline
$F_{15}$  &   & $(xy \cdot x)z = (x\cdot y  x)z$ & 11 & 7 &6\\
\hline
$F_{16}$  &   & $(xy \cdot x)z = x(y  x\cdot z)$ & 11 & 7 & 6\\
\hline
$F_{17}$  & left \,\, Mouf.   & $(xy \cdot x)z = x(y \cdot  x z)$ & 10 & 7 & 5 \\
\hline
$F_{18}$  &   & $(x\cdot y  x)z = x(y   x \cdot z)$ & 8& 5 & 4 \\
\hline
$F_{19}$  &  left \,\ Bol  & $(x\cdot y  x)z = x(y \cdot  x  z)$ & 9 & 6 & 5 \\
\hline
$F_{20}$  &   & $x(y  x \cdot z) = x(y \cdot  x  z)$ & 9 & 6 & 5\\
\hline
$F_{21}$   &  & $yx \cdot zx = (yx\cdot z)x$  & 10 & 6 & 5\\
\hline
$F_{22}$   & extra \,\, ident.   & $yx \cdot zx = (y \cdot x z)x$ &  9& 6   &5\\
\hline
$F_{23}$   &   & $yx \cdot zx = y ( x z \cdot x)$ & 9 & 6 & 5\\
\hline
$F_{24}$  &   & $yx \cdot zx = y ( x \cdot z  x)$ & 8 & 5 & 4 \\
\hline
$F_{25}$    &    & $(yx \cdot z)x = (y \cdot x z)x$ & 9 & 6 & 5 \\
\hline
$F_{26}$   &  right \,\, Bol  & $(yx \cdot z)x = y (x z \cdot x)$ & 9& 6 &5 \\
\hline
$F_{27}$    &  right \,\, Mouf.  & $(yx \cdot z)x = y (x\cdot  z  x)$ & 10 & 7 & 5\\
\hline
$F_{28}$    &   & $(y \cdot xz)x = y(xz\cdot x)$ & 8 & 5 & 4\\
\hline
$F_{29}$    &   & $(y\cdot xz)x  = y(x \cdot zx)$ & 11 & 7 & 6 \\
\hline
$F_{30}$   &   & $y(xz \cdot x) = y(x \cdot zx)$  & 11 & 7 & 6\\
\hline
$F_{31}$    &   & $yx \cdot xz = (yx \cdot x)z$ & 8 & 5 & 4 \\
\hline
$F_{32}$   &   & $yx \cdot xz = (y\cdot x x)z$ & 9 & 6 & 5\\
\hline
$F_{33}$   &   & $yx \cdot xz = y( x x \cdot z)$ & 9 & 6 & 5\\
\hline
$F_{34}$   &   & $yx \cdot xz = y( x \cdot  x z)$  & 8 & 5 & 4\\
\hline
$F_{35}$   &   & $(yx \cdot x) z = (y\cdot xx)z$  & 9 & 6 & 5\\
\hline
$F_{36}$   & RC \,\, ident. & $(yx \cdot x) z = y(xx \cdot z)$ & 9 & 6 & 5 \\
\hline
$F_{37}$   & C\,\, ident.  & $(yx \cdot x)z = y(x\cdot xz)$ &  10 & 7 & 5\\
\hline
$F_{38}$  &  & $(y \cdot xx) z = y(xx \cdot z)$ & 8 & 5 & 4\\
\hline
$F_{39}$   & LC\,\, ident.  & $(y \cdot xx) z = y(x\cdot xz)$ & 9 & 6 & 5\\
\hline
$F_{40}$   &  & $y (xx\cdot z)  = y (x\cdot xz)$ & 9 & 6 & 5\\
\hline
$F_{41}$   & LC \,\, ident.  & $xx \cdot yz = (x \cdot xy)z$ & 9 & 6 & 5 \\
\hline
$F_{42}$   &  & $xx\cdot yz = (xx \cdot y)$ z & 12 & 7 & 5 \\
\hline
$F_{43}$   &  & $xx\cdot yz  = x(x \cdot yz)$ & 8 & 5 & 4 \\
\hline
$F_{44}$   &  & $xx\cdot yz  = x(x y \cdot z)$ & 9 & 6 & 5 \\
\hline
$F_{45}$   &  & $(x \cdot xy)z = (xx \cdot y)z$ & 9 & 6 & 5\\
\hline
$F_{46}$   & LC \,\, ident. & $(x\cdot xy)z = x(x \cdot yz)$ & 11 & 7 & 6\\
\hline
$F_{47}$   &  & $(x\cdot xy)z = x(xy \cdot z)$ & 8 & 5 & 4 \\
\hline
$F_{48}$   & LC\,\, ident.  & $(xx \cdot y)z = x(x \cdot yz)$ & 10& 7 & 5\\
\hline
$F_{49}$   &  & $(xx \cdot y)z = x(xy \cdot z)$ & 9 & 6 & 5\\
\hline
$F_{50}$   &  & $x(x\cdot y z) = x(xy\cdot z)$ & 11 & 7 & 6\\
\hline
$F_{51}$   &  & $yz\cdot xx  = (yz\cdot x)x$ & 8 & 5 & 4\\
\hline
$F_{52}$   &  & $yz\cdot xx  = (y\cdot z x)x$ & 9 & 6 & 5\\
\hline
$F_{53}$   & RC\,\, ident.  & $yz\cdot xx  = y(z x \cdot x)$ & 9 & 6 & 5\\
\hline
$F_{54}$   &  & $yz\cdot xx  = y(z \cdot x  x)$ & 12 & 7 & 6 \\
\hline
$F_{55}$   & & $(yz \cdot x) x = (y \cdot zx)x$  & 11 & 7 & 6 \\
\hline
$F_{56}$   & RC\,\, ident.  & $(yz \cdot x)x  = y (zx\cdot x)$ & 11& 7 & 6 \\
\hline
$F_{57}$   & RC\,\, ident. & $(yz\cdot x)x  = y (z \cdot xx)$ & 10 & 7 & 6 \\
\hline
$F_{58}$   &  & $(y \cdot zx)x = y (zx\cdot x)$ & 8 & 5 & 4 \\
\hline
$F_{59}$   &  & $(y \cdot zx)x = y (z\cdot x x)$ & 9 & 6 & 5 \\
\hline
$F_{60}$   &  & $y(zx \cdot x) = y(z \cdot xx)$ & 9 & 6 & 5 \\
\hline
\end{longtable}

A new algorithm was developed and the corresponding program was written for generating the  groupoids of small (2, 3, and 4) orders with generalized  Bol-Moufang identities.

Notice, number of groupoids  of order $3$ with mentioned in table identities are also given in \cite{Cernov_2018_2}.

Identities Left Bol and Right Bol, LC- and RC-, LN- and RN-, L2 and L3, M1 and M3, M2 and M4, T1 and T3, T4 and T5, are  $(12)$-parastrophic identitities. Therefore the numbers of  groupoids of fixed order with these $(12)$-parastrophic identitities coincide.

\begin{longtable}{|m{2.5cm}|c|c|c|c|c|}
\caption{Number of groupoids  of order 2, 3 and 4 with Bol-Moufang  identities.}\\
\hline
\textbf{Name} & \textbf{Abbr.} & \textbf{Ident.} &
\textbf{\#2}  & \textbf{\#3}  & \textbf{\#4} \\
\hline
\endfirsthead
\multicolumn{6}{c}%
{\tablename\ \thetable\ -- \textit{Continued from previous page}} \\
\hline
\textbf{Name} & \textbf{Abbr.} & \textbf{Ident.} &
\textbf{\#2} & \textbf{\#3}  & \textbf{\#4} \\
\hline
\endhead
\hline \multicolumn{6}{r}{\textit{Continued on next page}} \\
\endfoot
\hline
\endlastfoot
%Semigroups &
%SGR &
%$ xy\cdot z = x\cdot yz $ &
%10 &
%113 &
%3492 \\

Extra &
EL &
$ x(y(zx)) = ((xy)z)x $ &
10 &
239 &
18744 \\

Moufang &
ML &
$ (xy)(zx) = (x(yz))x $ &
9 &
196 &
25113 \\

Left Bol &
LB &
$ x(y(xz)) = (x(yx))z $ &
9 &
215 &
22875 \\

Right  Bol &
RB &
$ y((xz)x) = ((yx)z)x $ &
9 &
215 &
22875 \\

C-loops &
CL &
$ y(x(xz)) = ((yx)x)z $ &
10 &
209 &
26583 \\

LC-loops &
LC &
$ (xx)(yz) = (x(xy))z $ &
9 &
220 &
26583 \\

RC-loops &
RC &
$ y((zx)x) = (yz)(xx) $ &
9 &
220 &
26583 \\

Middle Nuclear Square &
MN &
$ y((xx)z) = (y(xx))z $ &
8 &
350 &
122328 \\

Right Nuclear Square &
RN &
$ y(z(xx)) = (yz)(xx) $ &
12 &
932 &
2753064 \\

Left Nuclear Square &
LN &
$ ((xx)y)z = (xx)(yz) $ &
12 &
932 &
2753064 \\

Comm. Moufang &
CM &
$ (xy)(xz) = (xx)(zy) $ &
8 &
297 &
111640 \\

%Abelian Group &
%AG &
%$ x(yz) = (yx)z $ &
%7 &
%91 &
%2249 \\

Comm. C-loop &
CC &
$ (y(xy))z = x(y(yz)) $ &
8 &
169 &
12598 \\

Comm. Alternative &
CA &
$ ((xx)y)z = z(x(yx)) $ &
6 &
110 &
10416 \\

Comm. Nuclear square &
CN &
$ ((xx)y)z = (xx)(zy) $ &
9 &
472 &
1321661 \\

Comm. loops &
CP &
$ ((yx)x)z = z(x(yx)) $ &
8 &
744 &
1078744 \\

Cheban, 1 &
C1 &
$ x((xy)z) = (yx)(xz) $ &
8 &
219 &
19846 \\

Cheban, 2 &
C2 &
$ x((xy)z) = (y(zx))x $ &
6 &
153 &
12382 \\

Lonely, I &
L1 &
$ (x(xy))z = y((zx)x) $ &
6 &
117 &
6076 \\

Cheban, I, Dual &
CD &
$ (yx)(xz) = (y(zx))x $ &
8 &
219 &
19846 \\

Lonely, II &
L2 &
$ (x(xy))z = y((xx)z) $ &
7 &
157 &
11489 \\

Lonely, III &
L3 &
$ (y(xx))z = y((zx)x) $ &
7 &
157 &
11489 \\

Mate, I &
M1 &
$ (x(xy))z = ((yz)x)x $ &
6 &
111 &
11188 \\

Mate, II &
M2 &
$ (y(xx))z = ((yz)x)x $ &
7 &
196 &
26785 \\

Mate, III &
M3 &
$ x(x(yz)) = y((zx)x)  $ &
6 &
111 &
11188 \\

Mate, IV  &
M4 &
$ x(x(yz)) = y((xx)z)  $ &
7 &
196 &
26785 \\

Triad, I &
T1 &
$ (xx)(yz) = y(z(xx)) $ &
6 &
162 &
67152 \\

Triad, II &
T2 &
$ ((xx)y)z = y(z(xx)) $ &
6 &
180 &
53832 \\

Triad, III &
T3 &
$ ((xx)y)z = (yz)(xx) $ &
6 &
162 &
67152 \\

Triad, IV &
T4 &
$ ((xx)y)z = ((yz)x)x $ &
6 &
132 &
42456 \\

Triad, V &
T5 &
$ x(x(yz)) = y(z(xx)) $ &
6 &
132 &
42456 \\

Triad, VI  &
T6 &
$ (xx)(yz) = (yz)(xx) $ &
8 &
1419 &
9356968 \\

Triad, VII  &
T7 &
$ ((xx)y)z = ((yx)x)z $ &
12 &
428 &
2914658 \\

Triad, VIII  &
T8 &
$ (xx)(yz) = y((zx)x) $ &
6 &
120 &
11580 \\

Triad, IX  &
T9 &
$ (x(xy))z = y(z(xx)) $ &
6 &
102 &
6192 \\

Frute  &
FR &
$ (x(xy))z = (y(zx))x $ &
6 &
129 &
16600 \\

Crazy Loop  &
CR &
$ (x(xy))z = (yx)(xz) $ &
7 &
136 &
12545 \\

Krypton  &
KR &
$ ((xx)y)z = (x(yz))x $ &
9 &
268 &
93227 \\

\end{longtable}

%\bibliographystyle{plain}
%\bibliography{THESIS_1}

\vspace{2mm}
\begin{center}
\begin{parbox}{118mm}{\footnotesize
Grigorii Horosh$^{1}$, Victor Shcherbacov$^{2}$,
Alexandr Tcachenco$^{3}$, Tatiana Yatsko$^{4}$
\vspace{3mm}

\noindent $^{1}$Ph.D. Student/Institute of Mathematics and Computer Science of Moldova

\noindent Email: grigorii.horos@math.md

\vspace{3mm}

\noindent $^{2}$Principal Researcher/Institute of Mathematics and Computer Science of Moldova

\noindent Email: victor.scerbacov@math.md

\vspace{3mm}

\noindent $^{3}$Master Student /Institute of Mathematics and Computer Science of Moldova

\noindent Email: atkacheno405@gmail.com

\vspace{3mm}

\noindent $^{4}$Master Student/Shevchenko Transnistria State University

\noindent Email: yaczkot@bk.ru

}

\end{parbox}
\end{center}

\end{document}